%% file: main.tex
\documentclass[article, 12pt]{amsart}

\input{preamble}

\title{On intersections of locally quasiconvex subgroups}
\author{Nir Lazarovich}
\address{Department of Mathematics\\
 Technion\\
 Haifa, 32000 Israel}
 \email{lazarovich@technion.ac.il}
 \thanks{Lazarovich was supported by the Israel Science Foundation (grant no. 1576/23)}

\author{Zachary Munro}
\address{School of Mathematics, Fry Building, University of Bristol, United Kingdom}
\email{zachy.munro@bristol.ac.uk}
\thanks{Munro was supported by the National Science Foundation under Award No. DMS–
2503331.}
\date{}

\begin{document}

\maketitle

\begin{abstract}
Let $H_1, H_2$ be locally quasiconvex, torsion-free subgroups of a hyperbolic group $G$.
We prove $\rank(H_1\cap H_2)$ can be bounded in terms of $\rank(H_1)$ and $\rank(H_2)$.
\end{abstract}

\section{Introduction}

A group has the \emph{finitely-generated intersection property} (f.g.i.p.) if every intersection of finitely-generated subgroups is finitely-generated. 
The f.g.i.p. is also referred to as \emph{Howson's property} after the mathematician who first established the property in free groups \cite{howson1954intersection}.
He proved that if $H_1,H_2<F$ are subgroups of a free groups of ranks $r_1,r_2\ge 1$, then
$\rank(H_1\cap H_2)\le 2r_1r_2-r_1-r_2+1$.
Neumann \cite{neumann1956hanna} improved this bound (see also \cite{neumann1990walter}), 
and Friedman\cite{friedman2015sheaves} and Mineyev\cite{mineyev2012submultiplicativity}, in solving the Hanna Neumann Conjecture, obtained the sharp bound $\rank(H_1\cap H_2)\le r_1r_2-r_1-r_2+2$.
For surface groups, similar bounds for finitely-generated subgroups of a surface groups were obtained in \cite{antolin2022hanna, soma1990intersection, soma1991intersection}.

This article is concerned with quantified rank bounds for intersections of subgroups in a general hyperbolic group.
However, one cannot hope for statements of the same strength in the general setting.
Rips \cite[Corollary (a)]{rips1982subgroups} constructed hyperbolic groups without f.g.i.p., i.e., examples of finitely-generated subgroups of hyperbolic groups with infinitely-generated intersection. 
The next example shows that one can even take the subgroups to be finitely-generated free groups.

\begin{example}\label{example: two fibers}
    Let $G$ be a hyperbolic free-by-cyclic group with two different fibrations over $\bbZ$ with fibers $H_1,H_2$. 
    That is, $H_1,H_2\normalin G$ are finitely-generated free subgroups such that $G/H_i\simeq \bbZ$. It follows that the intersection $H_1\cap H_2$ is infinitely-generated, since it is a non-trivial, infinite-index, normal subgroup of the free group $H_1$.
\end{example}

The problem in \Cref{example: two fibers} is that the subgroups $H_1,H_2$ are not quasiconvex.
It is well-known that the intersection $H_1\cap H_2$ of quasiconvex subgroups is finitely-generated. 
Moreover, the rank of the intersection can be bounded by a function of the quasiconvexity constants of $H_1, H_2$. 
However, as the following example shows, assuming quasiconvexity is not enough to bound $\rank(H_1\cap H_2)$ as a function of $\rank(H_1),\rank(H_2)$.

\begin{example}\label{example: cyclic cover}
    Let $G=F \rtimes \gen{t}$ be a hyperbolic free-by-cyclic group,
    let $t\in H_1\le G$ be a fixed quasiconvex free group of rank 2. 
    Consider the sequence $H_{2,n} = F\rtimes \gen{t^n}$ of index $n$ subgroups of $G$.
    On the one hand, the subgroups $H_1,H_{2,n}$ are quasiconvex and have bounded rank, namely $\rank(H_1)=2$ and $\rank(H_{2,n})\le \rank(F)+1$ for all $n$. On the other hand, $\rank(H_1\cap H_{2,n}) \to \infty$. 
    More precisely, $\rank(H_1\cap H_{2,n})= n+1$ since $H_1\cap H_{2,n}$ is an index $n$ subgroup of the rank 2 free group $H_1$.
\end{example}

The above examples show we must restrict the class of subgroups we consider.
We prove the following generalization of Howson's Theorem.

\begin{theoremA}\label{main thm}
    Let $G$  be a hyperbolic group. 
    There exists $r=r(r_1,r_2,G)$ such that if $H_1,H_2<G$ are locally quasiconvex, torsion-free subgroups of ranks $r_1,r_2$, then 
    $\rank(H_1\cap H_2)\le r$.
    
\end{theoremA}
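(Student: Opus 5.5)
The strategy is to argue by contradiction, via a compactness or limiting argument in the space of marked groups, or more concretely through a uniform quasiconvexity statement. The key intermediate claim is a \emph{uniform local quasiconvexity} result. For every $k$ there should be a constant $\sigma(k,G)$ such that every $k$-generated subgroup $K$ of a locally quasiconvex, torsion-free subgroup $H<G$ of rank at most $r_1$ is $\sigma$-quasiconvex in $G$. This cannot be true in general, since it is exactly what fails in \Cref{example: cyclic cover}: there $H_{2,n}$ is quasiconvex but not locally quasiconvex, as it contains the fiber $F$. The plan is to aim directly for the weaker statement that is actually needed. It says that the quasiconvexity constant of $H_1\cap H_2$ is bounded in terms of $r_1,r_2$ and $G$. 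Once we have it, the standard argument gives the conclusion: a $\sigma$-quasiconvex subgroup is generated by elements of length at most $2\sigma+1$ (up to constants depending on $\delta$ and the generating set), so its rank is at most the number of elements in a ball of that radius in $G$.

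\textbf{Step 1 (bounding the quasiconvexity of $H_i$ itself).} Suppose there were a sequence of locally quasiconvex torsion-free subgroups $H^{(n)}$ of rank at most $r_1$ whose quasiconvexity constants tend to infinity. Pass to a limit of the generating tuples, using the Bestvina--Paulin/Rips machinery. Rescale $G$ by the displacement of the generating tuple to obtain an action of the limit group on an $\mathbb{R}$-tree. Torsion-freeness and the bounded rank give a nontrivial stable action of a finitely generated group, and the Rips machine decomposes it. The aim is to show that non-quasiconvexity in the limit forces a non-quasiconvex finitely generated subgroup of some $H^{(n)}$. Concretely, an axial or surface component, or a simplicial edge with nontrivial elliptic subgroups, should produce, via shortening, a free-by-cyclic-like or fiber-like subgroup, as in \Cref{example: two fibers}. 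That subgroup is finitely generated but distorted, which contradicts local quasiconvexity. If instead the limit is free and simplicial (a Schottky-type action), the generators eventually act as a uniform quasi-isometric embedding, which contradicts the unbounded constants.

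\textbf{Step 2 (passing to the intersection).} With uniform constants $\sigma_i=\sigma_i(r_i,G)$ for $H_1,H_2$, the classical argument applies. A geodesic from $1$ to $h\in H_1\cap H_2$ stays $\sigma_i$-close to each $H_i$. The pairs of cosets of nearby points lie in a set of size bounded by $|B(\sigma_1)|\cdot|B(\sigma_2)|$. By the pigeonhole argument of Short, $H_1\cap H_2$ is then $\sigma$-quasiconvex with $\sigma$ depending only on $\sigma_1,\sigma_2$ and $G$. The rank bound then follows as described above.

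\textbf{Main obstacle.} The hard part is Step~1. It turns a limiting $\mathbb{R}$-tree action into an actual finitely generated non-quasiconvex subgroup of some $H^{(n)}$, with control that is uniform in $n$. The first thing to try is to show that, at a fixed scale, $H^{(n)}$ splits over a cyclic or trivial subgroup with some vertex group that is not quasiconvex. Then induct on a complexity, such as rank or Euler characteristic, using the fact that the vertex groups of such splittings are finitely generated subgroups of $H^{(n)}$ and hence quasiconvex.
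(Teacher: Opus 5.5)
\section*{Review of the proposal}

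Your proposal has a genuine gap, and it lies in the target statement itself, not only in how Step~1 is carried out. The uniform quasiconvexity bound you aim for in Step~1 is false. So is the ``weaker'' statement you say is actually needed.

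Fix an infinite-order $g\in G$ and set $H^{(n)}=\langle g^n\rangle$. Each $H^{(n)}$ is infinite cyclic, hence torsion-free and locally quasiconvex of rank $1$. By the Morse lemma, the geodesic $[1,g^n]$ passes within some $D=D(g,G)$ of $g^{\lfloor n/2\rfloor}$. On the other hand, for every $k\in\bbZ$,
\[
d\bigl(g^{\lfloor n/2\rfloor},g^{nk}\bigr)\ge \epsilon\,\bigl|nk-\lfloor n/2\rfloor\bigr|\ge\epsilon\lfloor n/2\rfloor,
\]
where $\epsilon>0$ is a lower bound on translation lengths of infinite-order elements. Hence the quasiconvexity constant of $H^{(n)}$ is at least $\epsilon\lfloor n/2\rfloor-D$, which tends to infinity. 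Taking $H_1=H_2=H^{(n)}$ shows the same for $H_1\cap H_2$.

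Two consequences follow.
\begin{enumerate}
\item Contrary to your first paragraph, uniform quasiconvexity already fails for cyclic subgroups. It has nothing to do with \Cref{example: cyclic cover}.
\item No argument of the form ``bounded quasiconvexity constant $\Rightarrow$ generators in a bounded ball $\Rightarrow$ bounded rank'' can prove the theorem, because bounded rank is compatible with arbitrarily long generators.
\end{enumerate}
Your limiting argument cannot produce the contradiction you expect either. Rescaling $\langle g^n\rangle$ by its displacement gives $\bbZ$ acting by translations on a line, which is exactly your Schottky-type case. A uniform quasi-isometric embedding at the rescaled scale says nothing about quasiconvexity constants at the original scale.

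The missing idea is a scale-invariant substitute for the quasiconvexity constant, one in which the combinatorics is bounded but lengths are not. The paper uses carriers.
\begin{itemize}
\item By \Cref{lem: rank and topological vertex comparison}, a minimal carrier of a rank-$r$ subgroup has at most $3r$ topological edges, each of arbitrary length.
\item \Cref{thm: carrier existence} shows such a carrier is $\kappa(r,G)$-quasigeodesic.
\item The proof is by induction over the edges ordered by length. Minimality (the folding moves (F1), (F2)) controls Gromov products where a long edge meets the previously built subgraph. Local quasiconvexity, together with finiteness of the short-edge configurations, handles short edges. \Cref{local to global} then globalizes.
\end{itemize}

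Your Step~2 must change accordingly, since Short's pigeonhole bound is in terms of $\sigma_1,\sigma_2$, and these are not controlled. The paper instead proceeds as follows.
\begin{itemize}
\item Take the fiber product of $N_\rho(A_1)$ with an embedded carrier $A_2$, supplied by \Cref{lem:embedded carrier}.
\item By \Cref{nullhomotopic tubes}, the long overlaps along interiors of topological edges are null-homotopic tubes, so they can be replaced by trees.
\item The resulting carrier of $H_1\cap H_2$ has boundedly many topological vertices. That count, not any length bound, is what bounds the rank.
\end{itemize}
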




Our proof proceeds in two steps, separated into two sections. 
In \Cref{sec:carrier section}, we construct carriers for locally quasiconvex, torsion-free subgroups of a hyperbolic group $G$.
A \emph{carrier} for $H<G$ is a geometric model for $H$, and \Cref{thm: carrier existence} shows the quasiconvexity constant of the carrier depends only on $\rank(H)$ and $G$.
Variations of this result have been obtained in \cite{kapovich2004freely,biringer2025thick,kohav2024ascendingchainsfreequasiconvex}.
We include a statement and proof tailored to our construction.
In \Cref{sec:fiber product section}, we define a notion of coarse fiber product for carriers. 
We show the fiber product of carriers for $H_1,H_2$ carries $H_1\cap H_2$ and that the rank of the fiber product is bounded by a function of the ranks of the factors.
In combination with \Cref{thm: carrier existence}, this proves \Cref{main thm}.

\section{Carriers}\label{sec:carrier section}

For the entirety of this section, we fix a Cayley complex $X$ of a hyperbolic group $G$, i.e. the universal cover of a finite presentation complex.
For $H\le G$, we let $X_H$ be the space $H\backslash X$ with the basepoint $x_H=H\cdot 1$.

\begin{definition}[Carrier]
    A \emph{carrier} of $H$ is a $\pi_1$-surjective graph map $f:(A,*)\to (X_H,x_H)$ where $(A,*)$ is a finite, pointed graph.
    A carrier is \emph{minimal} if $A$ has the minimal number of edges amongst all carriers of $H$.
    A carrier is $\kappa$-\emph{quasigeodesic} if every path $\gamma_1:[0,1]\to X_H$ with endpoints in $f(V(A))$ is homotopic rel endpoints to a path of the form $[0,1]\xrightarrow[]{\cong} [0,\ell]\xrightarrow[]{\gamma_2} A\xrightarrow[]{f} X_H$ where $f\circ \gamma_2$ is a combinatorial path that lifts to a $\kappa$-quasigeodesic in $X$.
\end{definition}

We will often suppress basepoints when discussing carriers.
\smallskip

Clearly, if a quasigeodesic carrier for $H$ exists then $H$ is quasiconvex.
Conversely, we will see in \Cref{stupid quasiconvexity bound}, that if a subgroup is quasiconvex, then any carrier is $\kappa$-quasigeodesic for some $\kappa$ depending on the carrier.
We will prove in \Cref{thm: carrier existence} that for locally quasiconvex, torsion-free $H$ the constant $\kappa$ can be chosen to depend only on $G$ and the rank of $H$. 

\begin{lemma}\label{stupid quasiconvexity bound}
Every carrier graph $f:A\to X_H$ of a quasiconvex subgroup $H$ is $\kappa$-quasigeodesic for some $\kappa=\kappa(H,f,X)$.
\end{lemma}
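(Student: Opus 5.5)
We lift everything to $X$ and compare the carrier with an orbit map of $H$. Let $\tilde A$ be the covering of $A$ corresponding to the kernel of $f_*:\pi_1(A,*)\to H$; since $f$ is $\pi_1$-surjective, the deck group of $\tilde A\to A$ is $H$. The map $f$ lifts to an $H$-equivariant map $\tilde f:\tilde A\to X$ sending a basepoint $\tilde *$ to $1$. Because $A$ is finite and $f$ is a graph map, $\tilde f$ is $L$-Lipschitz on vertices, with $L$ bounded by the maximal length of an edge image. Moreover $\tilde A$ is a connected graph on which $H$ acts cocompactly, so by \v{S}varc--Milnor its vertex set is quasi-isometric to $H$ with a word metric. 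Since $H$ is quasiconvex, hence finitely generated and undistorted in $G$, the orbit map $H\to X$ is a quasi-isometric embedding. Therefore $\tilde f|_{V(\tilde A)}$ is a quasi-isometric embedding $V(\tilde A)\to X$, with constants depending on $H$, $f$ and $X$.

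Next, given a path $\gamma_1$ in $X_H$ with endpoints $f(u),f(v)$ for vertices $u,v\in V(A)$, lift it to $\tilde\gamma_1$ in $X$ starting at a lift $\tilde f(\tilde u)$. Its endpoint is $\tilde f(\tilde v)$ for some lift $\tilde v$ of $v$, since $\tilde f$ is $H$-equivariant and surjects onto the lifts of $f(v)$ modulo the action. Some care is needed when $f$ identifies distinct vertices: we take $\tilde v$ to be a vertex in the preimage under the covering of the prescribed endpoint, which exists since $u,v$ are specified in $A$. Let $\tilde\gamma_2$ be a geodesic in the graph $\tilde A$ from $\tilde u$ to $\tilde v$, and let $\gamma_2$ be its projection to $A$. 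The path $\tilde f\circ\tilde\gamma_2$ and $\tilde\gamma_1$ have the same endpoints in the simply connected complex $X$, so they are homotopic rel endpoints there. Projecting this homotopy to $X_H$ gives the required homotopy between $\gamma_1$ and $f\circ\gamma_2$.

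It remains to check that $\tilde f\circ\tilde\gamma_2$ is a $\kappa$-quasigeodesic, and this is the main point. A geodesic in $\tilde A$ is a geodesic between any two of its vertices. We compose it with the vertex quasi-isometric embedding and interpolate along edges, each of which maps to a path of length at most $L$; this yields a quasigeodesic in $X$ with constants controlled by $L$ and the quasi-isometry constants. We must also ensure $f\circ\gamma_2$ is combinatorial. If $f$ collapses edges, we delete the degenerate constant pieces, which changes the constants only by a bounded amount since the number of consecutive collapsed edges along a geodesic in $\tilde A$ is bounded: collapsed subpaths of a geodesic map to a point and have bounded length by the quasi-isometric lower bound. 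Taking $\kappa$ to be the maximum of the resulting constants, which depends only on $H$, $f$ and $X$, completes the argument.
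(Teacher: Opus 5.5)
Your proposal is correct and takes the same approach as the paper. Both arguments pass to the cover of $A$ associated to $\ker f_*$, lift to an $H$-equivariant map $\tilde A\to X$, and use quasiconvexity (hence undistortedness) of $H$ together with the cocompact $H$-action to see that this lift is a quasi-isometric embedding, which yields the quasigeodesic carrier property. You also spell out steps the paper summarizes as ``this is equivalent to the carrier being quasigeodesic'': the \v{S}varc--Milnor comparison, the homotopy-lifting argument producing $\gamma_2$, and the interpolation along (possibly collapsed) edges.
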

\begin{proof}
    Let $\hat A\to A$ be the cover corresponding to $\ker(f_*)$, and let $\hat f:\hat A\to X$ be the lift of $\hat A\to A\to X_H$ to $X$. 
    Since $\pi_1A/\ker(f_*)=H$, there is an action of $H$ on $\hat A$ by deck transformations so that $\hat A/H=A$, and the elevation $\hat f$ is $H$-equivariant.
    Since $H$ is quasiconvex in $G$, we get $\hat f$ is a quasiisometric embedding.
    This is equivalent to the carrier $f:A\to X$ being quasigeodesic, and the quasigeodesicity constant clearly depends only on $H$, $f$, and $X$.
\end{proof}

\begin{definition}
    A \emph{topological vertex} is a vertex of degree not equal to two.
    A \emph{topological edge} is a maximal path subgraph such that the internal vertices of the path have degree two.
    For a graph $A$, we denote the set of topological vertices and edges by $\calV A$ and $\calE A$.
\end{definition}

The following lemma relates the rank of a carrier with the degree sum of its topological vertices and the number of topological edges.

\begin{lemma}\label{lem: rank and topological vertex comparison}
    If $A\to X_H$ is a carrier for $H$, then 
    \begin{equation}
    \rank H\leq \sum_{v\in \calV A}\deg(v).
    \end{equation}
    If $A$ has at most a single degree one vertex, then
    \begin{equation}\label{eq: rank vertex and edge comparison}
    |\calV A|\le 2\rank\pi_1 A +1 \quad \text{ and }\quad 
    |\calE A|\le 3\rank \pi_1 A
    \end{equation}
\end{lemma}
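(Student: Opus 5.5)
The plan is to reduce everything to Euler characteristic counting on the finite graph $A$, after passing to its ``topological'' structure. For the first inequality, note that $f_*:\pi_1A\to H$ is surjective, so $\rank H\le \rank\pi_1 A$, and it suffices to show $\rank \pi_1 A\le \sum_{v\in\calV A}\deg(v)$. We may assume $A$ is connected; since $f$ is a pointed map and is $\pi_1$-surjective onto $\pi_1(X_H,x_H)=H$, only the component of $*$ matters. We then take care of the degenerate cases directly. If $A$ has no topological vertices, then $A$ is a point or a cycle, so $\rank\pi_1 A\le 1$. This is the one place where the bound could fail: a cycle gives rank $1$ while the sum is $0$. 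We would handle it by regarding the basepoint $*$ as a topological vertex, or, if the paper's convention is strictly degree $\neq 2$, by observing that this edge case only affects a cyclic $H$. In the main case, the topological vertices and topological edges form a graph structure on $A$ (possibly with loops and multi-edges) homotopy equivalent to $A$. Then
\begin{equation*}
\rank\pi_1A = 1-\chi(A) = 1-|\calV A|+|\calE A|,
\end{equation*}
and the handshake lemma gives $2|\calE A|=\sum_{v\in\calV A}\deg(v)$. Hence
\begin{equation*}
\rank\pi_1 A=1-|\calV A|+\tfrac12\sum_{v\in\calV A}\deg(v)\le \sum_{v\in\calV A}\deg(v),
\end{equation*}
using $|\calV A|\ge 1$.

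For the second pair of inequalities we again use the topological graph structure. Write $V_1$ for the set of topological vertices of degree $1$, so $|V_1|\le 1$; every other topological vertex has degree $0$ or at least $3$. Degree $0$ occurs only when $A$ is a single point, which is trivial. Otherwise the handshake lemma gives
\begin{equation*}
2|\calE A|=\sum_{v}\deg v\ge 3(|\calV A|-|V_1|)+|V_1|=3|\calV A|-2|V_1|.
\end{equation*}
Substitute into $\rank\pi_1A=1-|\calV A|+|\calE A|$. This yields
\begin{equation*}
\rank\pi_1A\ge 1+\tfrac12|\calV A|-|V_1|\ge \tfrac12|\calV A|,
\end{equation*}
so $|\calV A|\le 2\rank\pi_1A\le 2\rank \pi_1 A+1$. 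For the edge bound, $|\calE A|=\rank\pi_1A-1+|\calV A|\le 3\rank\pi_1A-1$. This leaves slack for the cycle case, where $|\calV A|=0$, $|\calE A|$ is $0$ or is $1$ if one counts the loop, and the rank is $1$.

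The only real obstacle is bookkeeping in the degenerate cases. These are graphs with no topological vertex (a cycle, or a point), and whether a cycle counts as a topological edge. The general computation is a routine Euler characteristic count. I would state the conventions explicitly at the start and dispatch these cases separately before running the handshake computation.
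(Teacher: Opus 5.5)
Your proof is correct and follows the paper's route: $\rank H\le\rank\pi_1A=1-|\calV A|+|\calE A|$, the handshake identity $2|\calE A|=\sum_{v\in\calV A}\deg(v)$, and $\deg(v)\ge 3$ away from the single leaf; your direct substitution even gives the slightly sharper bounds $|\calV A|\le 2\rank\pi_1A$ and $|\calE A|\le 3\rank\pi_1A-1$. The degenerate case you flag is genuine and the paper passes over it. The paper's last step $1-|\calV A|+\tfrac12\sum\deg(v)\le\sum\deg(v)$ needs $\calV A\neq\emptyset$, so a cycle based at a degree-two vertex carrying an infinite cyclic $H$ violates the first inequality as literally stated, and counting the basepoint as a topological vertex is an appropriate repair.
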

\begin{proof}
    The first inequality follows from
    \[\rank H\leq \rank \pi_1A=1-\chi(A)=1-|\calV A|+\frac 12\sum_{v\in \calV A}\deg(v)\leq \sum_{v\in \calV A}\deg(v).\]
    Supposing $A$ has at most a single degree one vertex, we have $\deg(v)\geq 3$ for all other topological vertices.
    Thus
    \begin{align*}
       \rank \pi_1A=1-\chi(A)&=1-|\calV A|+\frac 12\sum_{v\in \calV A}\deg(v) \\
       &\geq \frac13-\frac13\sum_{v\in\calV A}\deg(v)+\frac 12\sum_{v\in \calV A}\deg(v) \\
       &\geq \frac16\sum_{v\in \calV A}\deg(v).
    \end{align*}
    Combining this with 
    $$\sum_{v\in \calV A}\deg(v)\geq 3(|\calV A|-1)+1\quad \text{ and }\quad\sum_{v\in \calV A}\deg v=2|\calE A|,$$ 
    we obtain \eqref{eq: rank vertex and edge comparison}.
\end{proof}

Note that only the basepoint of a minimal carrier can have degree equal one.
As a consequence of \Cref{lem: rank and topological vertex comparison}, if $A\to X_H$ is a minimal carrier with the same rank as $H$, then $\rank H$ is comparable (up to a multiplicative constant) to both $\sum_{v\in \calV A}\deg(v)$ and $|\calE A|$.

\begin{lemma}[Local-to-Global, \cite{coornaertDelzantPapadopoulos1990}~Ch.3 Thm~1.4]
\label{local to global}
    Let $X$ be a $\delta$ hyperbolic graph. For all $\kappa>0$ there exist $\kappa',L$ such that if a path in $X$ is a $(L,\kappa)$-local-quasigeodesic --- that is, the restrictions of the path to subsegments of length $L$ are $\kappa$-quasigeodesic --- then it is $\kappa'$-quasigeodesic.
\end{lemma}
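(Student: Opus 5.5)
The statement is the classical local-to-global theorem for quasigeodesics in hyperbolic spaces, and I would follow the standard argument: first compare a local quasigeodesic with a geodesic between its endpoints, then bootstrap. Fix $\kappa$ and let $\gamma:[a,b]\to X$ be an $(L,\kappa)$-local-quasigeodesic with $L$ to be chosen large compared with $\kappa$ and $\delta$. The goal is an upper bound on $|a-b|$ that is linear in $d(\gamma(a),\gamma(b))$, with constants depending only on $\kappa$ and $\delta$. The reverse inequality $d(\gamma(s),\gamma(t))\le \kappa|s-t|+\kappa$ is immediate: subdivide $[s,t]$ into pieces of length at most $L$ and use the local quasigeodesic bound on each piece.

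The first key step is a fellow-travelling estimate. Every subsegment of $\gamma$ of length $L$ is a $\kappa$-quasigeodesic, so by the Morse lemma it stays within $R=R(\kappa,\delta)$ of the geodesic joining its endpoints. I would subdivide $\gamma$ at times $t_0=a<t_1<\dots<t_n=b$ with spacing about $L/2$, and set $x_i=\gamma(t_i)$. Because $L\gg\kappa$, the quasigeodesic lower bound gives $d(x_i,x_{i+1})\ge L/(2\kappa)-\kappa$, which is large. The same bound applied to $\gamma|_{[t_{i-1},t_{i+1}]}$ (length about $L$) gives $d(x_{i-1},x_{i+1})\ge L/\kappa-\kappa$. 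Combining these, the Gromov product $(x_{i-1}\,|\,x_{i+1})_{x_i}$ is bounded by a constant $C=C(\kappa,\delta)$ that does not depend on $L$.

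The second step, which I expect to be the main obstacle, is to conclude that the broken geodesic $x_0x_1\cdots x_n$ is a global quasigeodesic. Here I would use the standard lemma for hyperbolic spaces: if consecutive segments have length at least $2C+c(\delta)$ and the Gromov products at the breakpoints are at most $C$, then
\[
d(x_0,x_n)\ge \sum_i d(x_i,x_{i+1}) - 2nC - O(n\delta).
\]
This is proved by induction, showing that $(x_0\,|\,x_{i+1})_{x_i}\le C+O(\delta)$ at every stage. The induction closes only because each segment is much longer than $C$, and the subtle point is exactly that the error does not accumulate. Choosing $L$ so that $L/(2\kappa)-\kappa\ge 4C+c(\delta)$ makes each segment lose at most half its length. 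That yields $d(x_0,x_n)\ge \tfrac12\sum_i d(x_i,x_{i+1})\gtrsim n L/(4\kappa)$.

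Finally, since $|b-a|\approx nL/2$, this gives $|b-a|\le \kappa' d(\gamma(a),\gamma(b))+\kappa'$. Applying the argument to every subpath of $\gamma$ (each of which is itself an $(L,\kappa)$-local-quasigeodesic) gives the full $\kappa'$-quasigeodesic inequality. A short leftover piece at the end of the subdivision is absorbed into the additive constant.
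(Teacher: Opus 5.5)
The paper gives no proof of this lemma; it only cites \cite[Ch.~3, Thm.~1.4]{coornaertDelzantPapadopoulos1990}. Your outline is the usual proof of that result: the Morse lemma on windows of length $L$, then the broken-geodesic lemma for long segments with bounded Gromov products. It goes through, but one step rests on the wrong ingredient.

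The $L$-independent bound on $(x_{i-1}\,|\,x_{i+1})_{x_i}$ does not follow from combining the two distance lower bounds. Together with the only available upper bounds $d(x_{i-1},x_i),\,d(x_i,x_{i+1})\le \kappa L/2+\kappa$, they give merely $(x_{i-1}\,|\,x_{i+1})_{x_i}\le \tfrac12(\kappa-\kappa^{-1})L+O(\kappa)$. This grows with $L$. With a bound of that form, your requirement $L/(2\kappa)-\kappa\ge 4C+c(\delta)$ cannot be met for any $L$ once $\kappa^2\ge 5/4$.

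The correct source is the fellow-travelling estimate you stated first. The restriction $\gamma|_{[t_{i-1},t_{i+1}]}$ has length $L$, so by the Morse lemma $x_i$ lies within $R=R(\kappa,\delta)$ of a geodesic $[x_{i-1},x_{i+1}]$. Since $(y\,|\,z)_x\le d(x,[y,z])$ in any geodesic space, you may take $C=R$. The distance lower bounds are then needed only to make the segments long compared with $C$.

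With this fix, the rest of your argument is fine. Your induction $(x_0\,|\,x_{i+1})_{x_i}\le C+O(\delta)$, via the four-point condition, closes as soon as $d(x_{i-1},x_i)>2C+O(\delta)$. The treatment of the leftover piece, the passage to subpaths, and the easy upper bound also work.
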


\begin{theoremA}\label{thm: carrier existence}
    For every hyperbolic group $G$, and $r\in \bbN$ there exists $\kappa=\kappa(r,G)$ such that if $H\le G$ is locally quasiconvex, torsion-free then every minimal rank $r$ carrier of $H$ is $\kappa$-quasigeodesic.
\end{theoremA}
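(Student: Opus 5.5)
Before planning, I fix what "minimal rank $r$ carrier" means: $H$ has rank $r$ and $A$ is a minimal carrier with $\rank\pi_1A=r$. I argue by contradiction with a limiting argument.

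The approach has four steps. The first reduces quasigeodesicity of the carrier to a local statement about the immersed topological edges. By minimality, no folding is possible: if two edges at a vertex mapped to the same edge of $X_H$, folding would produce a carrier with fewer edges. Hence $f$ is locally injective, i.e.\ an immersion on edges. Also, every vertex other than $*$ has degree at least two. If we collapse degree-two vertices, \Cref{lem: rank and topological vertex comparison} bounds $|\calV A|\le 2r+1$ and $|\calE A|\le 3r$. Thus $A$ is a bounded number of topological edges glued along a bounded combinatorial pattern. By \Cref{local to global}, it suffices to find $L,\kappa_0$ depending only on $(r,G)$ such that every immersed path in $\hat A$ (the cover from the proof of \Cref{stupid quasiconvexity bound}) of length $\le L$ maps to a $\kappa_0$-quasigeodesic in $X$. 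Indeed, the reduced path in $A$ homotopic to $\gamma_1$ then lifts to a $\kappa'$-quasigeodesic.

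The second step shows that each topological edge is individually quasigeodesic, with constants depending only on $r$ and $G$. Suppose instead some topological edge $e$ maps to a path $\beta$ in $X$ that is not a $\kappa_0$-quasigeodesic. Then some subpath of $\beta$ is far from its geodesic, so it can be replaced by a shorter path. Homotoping in $X_H$ and substituting gives a new carrier of $H$, with the same combinatorial pattern and strictly fewer edges. The homotopy is rel endpoints and happens inside the image of a single edge, so the modified graph still $\pi_1$-surjects onto $H$. This contradicts minimality. In fact, minimality forces each topological edge to be a local geodesic at scale about $10\delta$, hence uniformly quasigeodesic. So the difficulty is concentrated at the topological vertices.

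The third step handles the topological vertices and is the main obstacle. Two geodesic edges $e,e'$ meeting at a vertex $v$ may fellow travel for a long time, giving arbitrarily bad backtracking across $v$. Fellow travelling edges cannot literally be folded, since they are distinct paths, so I would use a compactness/limit argument. Suppose carriers $f_n:A_n\to X_{H_n}$ with $H_n$ of rank $r$ violate the bound with $L_n\to\infty$. Pass to a subsequence with a constant combinatorial pattern, and choose lifts so that at a fixed vertex two edges $\tilde e_n,\tilde e'_n$ fellow travel for length $\to\infty$. Label the edges of $A_n$ by words $w_n(e)$ in $G$. The fellow travelling means $w_n(e)=u_n a_n$ and $w_n(e')=u_n' a_n'$ with $u_n^{-1}u_n'$ of bounded length, hence from a finite set, so we may take it constant, say $c$.

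The fourth step re-carries $H_n$ with fewer edges. Modify $A_n$ by identifying the long common initial segment of $e$ and $e'$ into one edge, adding a short loop or edge labelled $c$. This is a partial fold with a bounded correction. Its image subgroup is $K_n=\langle H_n, g_n\rangle$ for some $g_n$ with $|g_n|$ bounded. Since the new graph has fewer edges, minimality says it cannot carry $H_n$ itself, so $K_n\ne H_n$. Here I use local quasiconvexity and torsion-freeness. $K_n$ is generated by at most $r+1$ elements and is quasiconvex. Because $c$ lies in a finite set, one could instead choose $c$ so that folding is forced: the $(L_n,\kappa)$-local quasigeodesic of large $L_n$ can be used to show that $c$, conjugated suitably, lies in $H_n$. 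A long fellow-travel between two $H_n$-translates gives elements $h\in H_n$ with $h$ close to a bounded-length conjugate. With $L_n$ larger than $|\mathcal B(R)|$, pigeonhole produces two such coincidences whose quotient is a nontrivial element of $H_n$ labelled by a bounded word along the overlap. Putting that element in as a loop lets us fold genuinely, reducing the edge count, which is a contradiction. Torsion-freeness ensures the pigeonhole element is nontrivial of infinite order rather than a torsion coincidence, so the fold actually lowers edges without changing the subgroup. Making this pigeonhole/fold step rigorous is where I expect the real work to lie, and local quasiconvexity is what keeps the enlarged intermediate groups quasiconvex so that the induction on edge counts can proceed.
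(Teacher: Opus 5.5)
\paragraph{Review.} Your Steps 1 and 2 are fine; minimality even forces each topological edge to lift to a geodesic of $X$. Steps 3--4 are the heart of the matter, and they are not a proof. They also aim at the wrong obstruction.

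Two edges $e=u\cdot a$ and $e'=u'\cdot a'$ fellow travelling from a common vertex need no limit argument. Let $c$ be a geodesic from the end of $u'$ to the end of $u$ with $|c|<|u|$. Delete $e$ and add an edge from the end of $u'$ to the end of $e$, mapped to $c\cdot a$. Since $X$ is simply connected, $e\simeq u'\cdot c\cdot a$ rel endpoints in $X_H$. So the new graph carries exactly $H$, has the same rank and has fewer edges, contradicting minimality. This rerouting, rather than identifying segments, is the paper's move (F1), and (F2) is its two-ended version. It works with $e'$ replaced by any path $\alpha$ avoiding $e$. Your move identifies segments instead. That is why you get a possibly larger group $K_n=\langle H_n,g_n\rangle$ and are forced into a pigeonhole-and-add-a-loop step that cannot be checked as stated.

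More seriously, you have not justified that failure of quasigeodesicity produces two edges fellow travelling at a common vertex. The failure modes you omit are exactly where the hypotheses are used.

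(a) A path of length $\le L$ may run through many short topological edges, and no folding or rerouting move sees this. The paper gets uniformity as follows. It orders the edges by length and inducts on the longest edge $e_s$ that the path uses. When $|e_s|\le L$, it observes that $A_s\to X_{H_s}$ lies in a finite list of carriers, depending only on $s$, $L$ and $X$, whose groups are quasiconvex by local quasiconvexity. Your remark that ``$K_n$ is quasiconvex'' gives no uniform constant, so it cannot feed a compactness argument. The same induction also fixes the quantifier order you gloss over in Step 1: $L$ must be chosen after $\kappa_0$, via the local-to-global lemma, because every path of length $\le L$ is trivially $\sqrt{L}$-quasigeodesic.

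(b) A long edge $e$ may fellow travel its own translate $g\widetilde{e}$, where $g$ is represented by a loop $\alpha$ in $A_{s-1}$ at an endpoint of $e$. This is the path $\beta\cdot\alpha\cdot\beta'$ with $\beta,\beta'$ at the same end of $e$, and $e$ cannot be rerouted through itself. Here the paper uses torsion-freeness. Since $g$ has infinite order, its translation length is at least $\epsilon(G)$. Hence a point $w\in\widetilde{e}$ with $d(w,gw)$ bounded lies within $\rho_1$ of the orbit $\{g^nu\}$. Then (F1), applied with $\alpha^n$, bounds $d(w,u)$.

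This configuration is precisely what breaks the statement once torsion is allowed; take $g$ a central involution in $\bbZ/2\times F_2$. A proof that invokes torsion-freeness only inside a vague pigeonhole step is therefore missing the essential argument.
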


\begin{proof}
    Let $f:A\to X_H$ be a minimal carrier for $H$ of rank $r$, let $e_1,\dots,e_t$ be the topological edges of $A$ listed in some order of non-decreasing length, and let $A_i$ be the union $e_1\cup \dots\cup e_i$ for $i=1,\dots, t$.
    By \Cref{lem: rank and topological vertex comparison}, $t$ is bounded by a function of $r$.
    
    Let $\gamma_1:[0,1]\to X_H$ be a path with endpoints on $f(A)$.
    Since $f:A\to X_H$ is a carrier, there exists a shortest combinatorial path $\gamma:[0,\ell]\to A$ such that $\gamma_1$ is homotopic rel endpoints to the path $[0,1]\xrightarrow[]{\cong} [0,\ell]\xrightarrow[]{\gamma} A\to X_H$.
    Let $s$ be the maximal index such that $\gamma$ traverses the edge $e_s$.
    We prove by induction on $s$ that there exists $\kappa(s,G)$ such that $f\circ \gamma$ is $\kappa(s,G)$-quasigeodesic.
    
    \begin{claim}\label{folding lemma}
        For any decomposition $e_i = \beta\cdot \beta' \cdot \beta''$ and any path $\alpha$ in $A_{i-1}$:
        \begin{enumerate}[label = (F\arabic*)]
            \item \label{first folding move} If $\eta$ is a path in $X_H$ homotopic rel endpoints to $f(\alpha\cdot \beta)$ then $|\beta|\le |\eta|$. See \Cref{fig:foldings} left.
            \item \label{second folding move} If $\eta$ is a path in $X_H$ homotopic rel endpoints to $f(\beta''\cdot \alpha\cdot \beta)$ then $|\beta''|\leq |\eta|$ and $|\beta|\le |\eta|$.
            See \Cref{fig:foldings} right.
        \end{enumerate}
    \end{claim}

    \begin{figure}[h]
        \centering
        \includegraphics[]{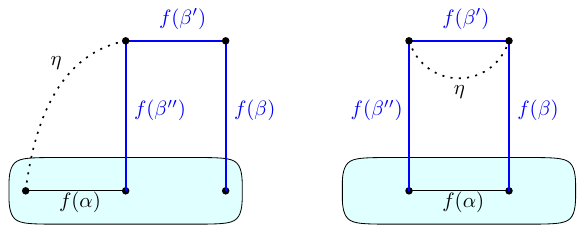}
        \caption{The left and right figures depict the setups of \ref{first folding move} and \ref{second folding move} in \Cref{folding lemma}, respectively. 
        In both figures, the light blue blob is $A_{i-1}$ and the dark blue arc is $f(e)=f(\beta)\cdot f(\beta')\cdot f(\beta'')$.}
        \label{fig:foldings}
    \end{figure}
    \begin{proof}
        \ref{first folding move} Suppose for the sake of contradiction that $|\eta|<|\beta|$.
        Define the graph $A'$ by replacing $e_i$ by an edge $e_i'$ whose endpoints are the endpoints of $\alpha\cdot e_i$. Define $f':A'\to X_H$ by $f'(e_i')=\eta \cdot f(\beta'\beta'')$ and $f'=f$ elsewhere.
        It is easy to see that $(A',f')$ is a shorter carrier of $H$ (of the same rank) in contradiction to the minimality of $(A,f)$.

        \ref{second folding move} Suppose for the sake of contradiction that $|\eta|<|\beta''|$.
        Define the graph $A'$ by replacing $e_i$ by two edges $e_i',e_i''$ 
        such that $e_i'$ has the same initial endpoint as $e_i$, at its terminal endpoint $e_i''$ forms a loop. 
        Define $f':A'\to X_H$ by $f'(e_i')=f(\beta)$, $f'(e_i'')=f(\beta')\eta$ and $f'=f$ elsewhere.
        It is easy to see that $(A',f')$ is a shorter carrier of $H$ in contradiction to the minimality of $(A,f)$.
    \end{proof}

    Write $\gamma = \alpha_1e_s^\pm \alpha_2\dots e_s^\pm \alpha_k$ where $\alpha_j\in A_{s-1}$.
    By induction, each $\alpha_j$ is $\kappa(s-1)$-quasigeodesic.

    We set the following constants:
    \begin{enumerate}
    \item the hyperbolicity constant $\delta$ of $X$,
    \item $\kappa_0 := \kappa(s-1)$,
    \item  by the Morse Lemma, there exists $M_0$ such that any two $\kappa_0$-quasigeodesics with the same endpoints are at Hausdorff distance at most $M_0$,
    \item $\rho_0 := M_0+4\delta$,
    \item there exists $\epsilon=\epsilon(X,G)>0$ so that every infinite-order $g\in G$ has translation length at least $\epsilon$, i.e. $d(w,g^kw)\geq k\epsilon$ for any $w\in X$, 
    \item by the Morse lemma, there exists $\rho_1$ such that if $d(w,gw),d(u,gu)\le 4\delta+6\rho_0$ then $d(w,\{g^n u\}_{n\in\mathbb N})\le \rho_1$,
    \item $\kappa :=\kappa_0+2\rho_0+\rho_1+2$, and
    \item by \Cref{local to global}, there exist $L,\kappa_2$ such that any $(L,\kappa)$-local-quasigeodesic is a $\kappa_2$-quasigeodesic.
    \end{enumerate}
    
    We divide into two cases.

    \textbf{Case 1:} $|e_s|\le L$.
    By local quasiconvexity, the subgroup  $H_s = f_*(\pi_1(A_s))$ is quasiconvex, and we can view $A_i$ as a carrier for $H_s$ by taking the lift $f_s:A_s\to X_{H_s}$ of $f|_{A_s}:A_s\to X_H$ to the cover $X_{H_s}\to X_H$. 
    
    Let $\calB_s$ be the collection of carrier graphs $(B,\ast)\to (X_K,x_K)$ such that the graph $B$ has at most $s$ topological edges, all of length at most $L$.
    The collection $\calB_s$ is finite and depends only on $s$ and $X$:
    Indeed, there are finitely many pointed graphs $(B,\ast)$ with at most $sL$ edges, finitely many groups $K<G$ generated by at most $s$ generators whose length is at most $2sL$, and finitely many based maps $(B,\ast)\to (X_K,x_K)$. 
    Let $\calB^{qc}_s$ be the subcollection of those carrier graphs for which $K$ is quasiconvex. 
    Let $\kappa_1$ be the maximal value of $\kappa(K,f,X)$ of \Cref{stupid quasiconvexity bound} ranging over all carrier graphs in $\calB^{qc}_s$. 
    
    Since $|e_1|\le |e_2|\le \dots \le|e_s|\le L$, the carrier $f_s:A_s\to X_{H_s}$ is in the collection $\calB^{qc}_s$, and thus $\gamma$ is $\kappa_1$-quasigeodesic.

    \textbf{Case 2:} $|e_s|>L$.
    We will prove that $f\circ \gamma$ lifts to a $\kappa_2$-quasigeodesic by showing that it lifts to an $(L,\kappa)$-local-quasigeodesic.
    Let $\gamma'$ be an arbitrary subpath of $\gamma$ of length $\le L$, and let $\tild\gamma'$ be a lift of $f\circ\gamma'$ to $X$ with its endpoints denoted $x,y$. 
    The goal is to prove that 
    \begin{equation}\label{desired distance inequality}
        d(x,y)\ge \tfrac1 {\kappa}|\gamma'| - \kappa.
    \end{equation}
    
    We divide into further subcases.
    
    \textbf{Case 2.1:} $\gamma'=\alpha\cdot \beta$  where $\alpha$ is a path in $A_{i-1}$ and $\beta$ is a subpath of $e_s$. 
    Let $\tild \gamma' = \tild\alpha\cdot  \tild \beta$ where $\tild\alpha, \tild \beta$ are the appropriate lifts of $\alpha, \beta$.
    Let $x,u$ be the endpoints of $\tild \alpha$ and $u,y$ be the endpoints of $\tild \beta$.
    Let $\rho = (x.y)_{u}$ be the Gromov product.
    Let $x',y'$ be the two points on the geodesics $[x,u]$ and $[u,y]$ at distance $\rho$ from $u$. 
    So, $d(x',y')\le 4\delta$.
    Let $x''$ be a point on $\tild\alpha$ such that $d(x'',x')\le M_0$.
    We have $d(x'',y)\le d(x'',x')+d(x',y')+d(y',u) \le M_0+4\delta +(|\beta |-\rho)$.
    By \ref{first folding move}, we have 
    $|\beta|\le d(x'',y)$.
    Therefore, $\rho \le \rho_0=M_0+4\delta$.

    Since $\alpha$ is a $\kappa_0$-quasigeodesic, we have
    \begin{align*}
        d(x,y)\ge d(x,u)+d(u,y) - 2\rho &\ge \tfrac 1{\kappa_0}|\alpha|-\kappa_0+|\beta| -(M_0+4\delta) \\
        &\ge \tfrac 1{\kappa}(|\alpha|+|\beta|) - \kappa \ge \tfrac 1{\kappa} |\gamma'|-\kappa,
    \end{align*} 
    using that $\kappa \ge \max\{\kappa_0+M_0+4\delta,1\}$.
    Thus $\gamma'$ satisfies \ref{desired distance inequality}.

    One similarly handles the case $\gamma'=\beta \cdot \alpha$ where $\alpha$ is a path in $A_{i-1}$ and $\beta$ is a subpath of $e_s$. 
    
    \textbf{Case 2.2:} $\gamma' = \beta'' \cdot \alpha \cdot \beta$ where $\alpha$ is in $A_{i-1}$ and $\beta,\beta''$ are disjoint subsegments of $e_s$.
    We have $\tild\gamma'=\tild\beta'' \cdot \tild\alpha \cdot \tild \beta$.
    Let $x,y$ be the endpoints of $\tild \gamma'$, and let $u,v$ be the endpoints of $\tild \alpha$.

    As in the previous case $(x.v)_{u}\le \rho_0$ and $(u.y)_{v}\le \rho_0$. 
    If $d(u,v)\ge 2\rho_0$, then an argument similar to the previous case shows 
    $d(x,y)\ge \tfrac{1}{\kappa}|\gamma'|-\kappa$.

    Thus we may assume $d(u,v)\le 2\rho_0$.
    By \ref{second folding move}, we have 
    \begin{equation*}
        d(x,y)\ge \tfrac 12(|\beta|+|\beta''|)= \tfrac 12|\gamma'|-\tfrac 12|\alpha|\ge \tfrac 12|\gamma'|-\rho_0\ge \tfrac 1\kappa|\gamma'|-\kappa,
    \end{equation*}
    since $\kappa\ge \max\{2,\rho_0\}$. 
    Hence \ref{desired distance inequality} is satisfied 
    
    \textbf{Case 2.3:} $\gamma' = \beta \cdot \alpha \cdot \beta'$ where $\alpha$ is in $A_{s-1}$ and $\beta,\beta'$ are intersecting subsegments of $e_s$.
    In particular, $\beta,\beta'$ contain the same endpoint of $e_s$.
    As above, we let $\tild\gamma'=\tild\beta \cdot \tild\alpha \cdot \tild\beta'$ and denote the endpoints of $\tild\gamma'$ and $\tild\alpha$ by $x,y$ and $u,v$, respectively.
    
    As before, if $d(u,v)\ge 2\rho_0$ then we are done. 
    Therefore, we may assume $d(u,v)\le 2\rho_0$.
    Let $g\in \pi_1A_{s-1}$ be the element represented by $\alpha$. We have $gu=v$, and so $$d(u,gu)\le 2\rho_0.$$


    Let $w\in \tild\beta$ be such that $d(w,u)=(x.y)_u$.
    Then, there exists $w''\in [u,y]$ such that $d(w,w'')\le \delta$.
    Since the triangle $u,v,y$ is $\delta$-thin, there exists a point $w'\in\tild \beta'$ such that $d(w'',w')\le \delta+2\rho_0$.
    We have $d(w,w')\le 2\delta+2\rho_0$.
    Since $w',gw$ lie on a common geodesic with endpoint $v$, namely the geodesic $\tilde \beta'$ or $g\tilde \beta$, we have
    \begin{multline*}
        d(w',gw)= |d(w',v)-d(gw,v)| = |d(w',v)-d(w,u)|\\
        \le d(w,w')+d(u,v)\le 2\delta +4\rho_0
    \end{multline*}
    and so
    $$d(w,gw)\le d(w,w')+d(w',gw)\le  4\delta+6\rho_0.$$
    
    By the choice of $\rho_1$ there exists $n$ such that $d(w,g^nu)\le \rho_1$.
    As before, \ref{first folding move} implies $(w.g^nu)_u\le \rho_0$, 
    and so
    $$d(w,g^n u) \ge  d(w,u)+d(g^nu,u)-2(w.g^nu)_u\ge d(w,u)-2\rho_0.$$
    Therefore, $d(w,u)\le \rho_1 + 2\rho_0$.

    By the choice of $w$ we get that $(x.y)_u \le \rho_1+2\rho_0$,
    and so 
    $$d(x,y)= d(x,u)+d(u,y)-2(x.y)_u\ge |\beta|+|\beta''|-\rho_1-2\rho_0\geq \frac 1\kappa|\gamma'|-\kappa,$$
    since $\kappa\ge \max\{1,\rho_1+2\rho_0\}$.
    This establishes \eqref{desired distance inequality} in {Case~2.3}.
    
    Setting $\kappa(s) = \max\{\kappa_1,\kappa_2\}$, in both Case 1 and Case 2 we proved that $\gamma$ is a $\kappa(s)$ geodesic. 
    Finally, taking $s=t$ completes the proof of \Cref{thm: carrier existence}.
\end{proof}

\section{Fiber products}\label{sec:fiber product section}
    The goal of the section is to prove \Cref{main thm}. 

\begin{lemma}\label{distance from edges}
    Let $f:A\to X_H$ be a $\kappa$-quasigeodesic carrier.
    For all $\rho\ge 0$ there exists $R=R(\kappa,\rho)$ such that if $e$ is a topological edge of $A$, then $$N_\rho(f(A-e)) \cap f(e) \subseteq f(N_R(\partial e)).$$ 
\end{lemma}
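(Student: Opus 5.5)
Fix a topological edge $e$ of $A$ and a point $p=f(q)$ with $q\in e$, and suppose $d(p, f(a))\le\rho$ for some $a\in A-e$. The goal is to show that $q$ lies within $R$ of an endpoint of $e$ in $A$. The idea is to build a closed-up path in $A$ that starts at $a$, enters $e$ through one of its endpoints, and travels along $e$ to $q$. Quasigeodesicity then forces its image to make real progress, while the short jump from $f(q)$ back to $f(a)$ says the progress is at most about $\rho$.

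First, choose a path $\eta$ in $X_H$ of length at most $\rho+1$ from $f(q)$ to $f(a)$. We may subdivide $A$ so that $q$ and $a$ are vertices; the quasigeodesic condition is stated for endpoints in $f(V(A))$, and subdividing does not change $\kappa$. Since $f$ is a carrier, the path $\eta$, which runs between points of $f(A)$, is homotopic rel endpoints to $f\circ\gamma_2$. Here $\gamma_2$ is a combinatorial path in $A$ from $q$ to $a$, and $f\circ\gamma_2$ lifts to a $\kappa$-quasigeodesic in $X$. The homotopy lifts to $X$, so the lift of $f\circ\gamma_2$ has endpoints at distance at most $\rho+1$. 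Hence
\[
|\gamma_2|\le \kappa(\rho+1+\kappa).
\]

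Second, relate $\gamma_2$ to the position of $q$ in $e$. The path $\gamma_2$ starts at the interior point $q$ of $e$ and ends at $a\notin e$. Since $e$ is a topological edge, its interior vertices have degree two, so $\gamma_2$ must leave $e$ through one of its endpoints. Before doing so it traverses the whole subsegment of $e$ from $q$ to that endpoint. (It may backtrack inside $e$, but it must eventually cover that subsegment.) Therefore $d_A(q,\partial e)\le|\gamma_2|\le\kappa(\rho+1+\kappa)$. Setting $R=\kappa(\rho+1+\kappa)+1$ gives $q\in N_R(\partial e)$, so $p\in f(N_R(\partial e))$.

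The main obstacle is the degenerate case where $e$ is a loop, or $a$ lies on $\partial e$. The argument still applies there, because exiting $e$ always happens through $\partial e$. We should also check that $\gamma_2$ cannot be shortened in a way that avoids reaching $\partial e$; this holds simply because $\gamma_2$ connects $q$ to a point outside the interior of $e$. A second small point is that we never need $\gamma_2$ to be shortest: the quasigeodesic inequality alone bounds its length. So $R$ depends only on $\kappa$ and $\rho$, as claimed.
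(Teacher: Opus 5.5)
Your argument is correct and is essentially the paper's proof: a path of length at most about $\rho$ from $f(q)$ to $f(a)$ is homotoped to a carrier path $\gamma_2$ whose lift is a $\kappa$-quasigeodesic. Since $\gamma_2$ must leave $e$ through $\partial e$, its length is at least $d_A(q,\partial e)$, and the quasigeodesic inequality bounds that length by roughly $\kappa(\rho+\kappa)$. The paper phrases this as a contradiction with $R=\kappa(\rho+\kappa)$ and takes $q$ and $a$ to be vertices from the start, which is cleaner than your subdivision remark: subdividing $A$ at non-vertex points would also force subdividing $X$ and so slightly perturbs the constants, although this does no harm.
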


\begin{proof}
    Let $R = \kappa(\rho+\kappa)$. 
    If the inclusion is false, then there exist $x\in Ve$ and $y\in V(A-e)$ such that $d(x,\partial e)>R$ and $d(f(x),f(y))\leq \rho$.
    Let $\gamma_1$ be a geodesic in $X_H$ joining $f(x)$ to $f(y)$. 
    The lift of $\gamma_1$ to $X$ is a geodesic with endpoints $p, q$ that are lifts of $f(x),f(y)$, so $d(p,q)= d(f(x),f(y))\le \rho$.
    We know $\gamma_1$ is homotopic rel endpoints to a path of the form $[0,\ell]\xrightarrow[]{\gamma_2} A\to X_H$ such that $f\circ \gamma_2$ lifts to a $\kappa$-quasigeodesic in $X$ joining $p,q$.
    Since $d(x,\partial e)>R$, we have $R<\ell$. 
    By $\kappa$-quasigeodesicity, we get the contradiction
    $$\rho=\kappa\ii R -\kappa< \kappa\ii \ell - \kappa \le d(p,q)\le \rho. \qedhere$$
\end{proof}

The following lemma says that, up to a controlled increase in rank, one can assume carriers of locally quasiconvex subgroups are embeddings.

\begin{lemma}
\label{lem:embedded carrier}
    For every hyperbolic group $G$, and $r\in \bbN$ there exists $\kappa=\kappa(r,G)$ and $r'=r'(r,G)$ such that if $H\le G$ is a rank $r$ locally quasiconvex subgroup, then $H$ admits a $\kappa$-quasiconvex carrier of rank $r'$ embedded in $X_H$.
\end{lemma}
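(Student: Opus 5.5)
Start from a minimal carrier $f:A\to X_H$ of $H$. Since $\rank H=r$, a minimal carrier has rank at most $r$. We can assume it has rank exactly $r$ by taking a carrier with $\pi_1A$ free on an $r$-element generating set, and minimizing edges among those. Then \Cref{thm: carrier existence} makes $f$ $\kappa_0$-quasigeodesic with $\kappa_0=\kappa(r,G)$. (The lemma as stated omits torsion-freeness. I will assume it, as in \Cref{main thm}, or else note that torsion-free is only used through \Cref{thm: carrier existence}.) By \Cref{lem: rank and topological vertex comparison}, $A$ has at most $3r$ topological edges and $2r+1$ topological vertices. The plan is to modify $f$ into an embedding by identifying the points of $A$ whose images coincide or nearly coincide, and then to control the rank and quasigeodesicity of the result.

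\textbf{Step 1: locate the non-injectivity.} Fix $\rho=0$ (or a small constant) and apply \Cref{distance from edges} with $R=R(\kappa_0,\rho)$. If two points of $A$ on different topological edges have the same image, they lie within $R$ of the topological vertices. If two points on the same topological edge $e$ share an image, I argue as in the proof of \Cref{distance from edges}. The subpath of $e$ between them maps to a loop, so the quasigeodesic property bounds its length by $\kappa_0^2$ unless that loop is nontrivial in $H$. Such a long self-overlap of a topological edge should contradict minimality via \ref{first folding move} and \ref{second folding move}, after subdividing $e$ at the two points. So all failures of injectivity happen inside the union $N$ of the $R$-balls around the topological vertices, together with short loops. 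This set has at most $(2r+1)\cdot D^R$ edges, where $D$ bounds the valence of $X$.

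\textbf{Step 2: embed.} Let $B=f(A)\subseteq X_H$ be the image subgraph, with the inclusion as the new carrier map. It is $\pi_1$-surjective because $f$ factors through it. It is embedded and finite. Its number of edges outside $f(N)$ is at most that of $A$. Inside $f(N)$ the images of at most $(2r+1)D^R$ edges sit. So $\chi(B)$ is bounded below, and $\rank\pi_1B\le r':=|E(B)|_{\text{topological}}$. This count is bounded in terms of $r$ and $G$, since away from $f(N)$ the graph $B$ consists of images of at most $3r$ topological edges. Finally, to respect the pointed convention, I put the basepoint at $x_H$, adding a path if needed. Such a path already lies in the image if $*\in A$.

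\textbf{Step 3: quasigeodesicity.} This is the main obstacle. A path in $B$ between vertices decomposes into pieces following images of edges of $A$, joined by jumps inside $f(N)$ of length bounded by $C=C(r,G)$. Any path in $X_H$ between points of $f(V(A))$ is homotopic to $f\circ\gamma_2$, a $\kappa_0$-quasigeodesic, and this image path lies in $B$. For endpoints that are vertices of $B$ not in $f(V(A))$, every vertex of $B$ is an image of a vertex of $A$ after subdividing. So each path of $B$-vertices is homotopic to a path in $B$ lifting to a $\kappa_0$-quasigeodesic, and the inclusion $B\to X_H$ is $\kappa_0$-quasigeodesic, taking $\kappa=\kappa_0$. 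The subtle point is that quasigeodesicity in the definition is about paths in $X_H$ being realized through the carrier, and images of carrier paths lie in $B$. Hence Step 3 reduces to checking that $\pi_1$-surjectivity and the vertex set are preserved. The real work is the finiteness and rank bound in Steps 1 and 2, especially ruling out long self-overlaps of a single topological edge via minimality. If that fails, I would instead truncate by replacing such overlaps with the folded edges produced in \Cref{folding lemma}.
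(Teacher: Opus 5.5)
Your route is the paper's: take a minimal rank-$r$ carrier, which is $\kappa_0$-quasigeodesic by \Cref{thm: carrier existence}, and pass to the image subgraph $f(A)\subseteq X_H$ with the inclusion. Quasigeodesicity is then inherited verbatim. Your Step 3 is correct, and the paper treats it as immediate, so it is not the main obstacle. The rank is bounded by confining non-injectivity of $f$ to $N_R(\partial e)$ via \Cref{distance from edges}. Your count (at most $3r$ long arcs plus a bounded neighbourhood of the topological vertices) is an acceptable substitute for the paper's $|\calE A'|\le(2R+1)|\calE A|$.

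The step that is not proved is the one you flag yourself: a topological edge $e$ overlapping itself. There are three problems with your treatment.
\begin{enumerate}
\item You apply the quasigeodesic bound to the subpath $[x,y]_e$. But \Cref{thm: carrier existence} controls a shortest path in $A$ in the relevant homotopy class, and that need not be $[x,y]_e$.
\item The nontrivial-loop case is left at ``should contradict minimality via \ref{first folding move} and \ref{second folding move}''. Identifying $x$ with $y$ gives a carrier with the same number of edges, and neither folding move applies directly, so no contradiction is exhibited.
\item You explicitly allow ``short loops'' anywhere along $e$. These would produce topological vertices of $f(A)$ outside $f(N)$ and break your Step 2 count, which silently assumes all non-injectivity lies in $N$.
\end{enumerate}

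The fix needs no new idea.
\begin{enumerate}
\item[(i)] In a minimal carrier, $f(e)$ is shortest in its homotopy class rel endpoints. Otherwise replace $e$ by a shorter representative, exactly as in the proof of \ref{first folding move} with $\alpha$ trivial and $\beta=e$. So $f(e)$ lifts to a geodesic in $X$, and no nondegenerate subpath of $e$ has null-homotopic image.
\item[(ii)] Suppose $x,y\in e$ are distinct, $f(x)=f(y)$, and $d(x,\partial e)>R=\kappa_0^2$. Run the proof of \Cref{distance from edges} with $\rho=0$ and $y\in e$. By $\pi_1$-surjectivity there is a path in $A$ from $x$ to $y$ with null-homotopic image. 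A shortest such path has length at most $\kappa_0^2\le R$, so it cannot reach $\partial e$ and stays in the open arc $e\setminus\partial e$. Hence it is homotopic in $A$ to $[x,y]_e$, contradicting (i).
\end{enumerate}
So every topological vertex of $f(A)$ lying on $f(e)$ is the image of a point of $N_R(\partial e)$, which is what both your count and the paper's require.

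The paper's own sentence that the topological vertices of $A'$ in $f(e)$ lie in $f(e)\cap f(A-e)$ passes over exactly this self-overlap case, so you were right to raise it. It just has to be closed by the argument above rather than by an appeal to the folding moves.
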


\begin{proof}
    Let $f:A\to X_H$ be a minimal carrier of rank $r$ for $H$.
    By \Cref{thm: carrier existence}, $f$ is $\kappa$-quasigeodesic for some $\kappa = \kappa(r,G)$.
    Set $A'=f(A)$ and let $f':A'\hookrightarrow X_H$ be the inclusion map.
    Clearly $f'$ is a $\kappa$-quasigeodesic carrier for $H$, so it remains to bound its rank.
    
    We bound $\rank(A')$ by bounding $|\calE A'|$. 
    The topological vertices of $A'$ in $f(e)$ are contained in $f(e)\cap f(A-e)$.
    By \Cref{distance from edges}, there exists $R=R(\kappa,\rho)$ such that $f(e)\cap f(A-e)\subseteq f(N_R(\partial e))$. 
    Consequently, $f(e)$ contains at most $2R+2$ topological vertices and thus at most $2R+1$ topological edges.
    We conclude that $|\calE A'|\le (2R+1)|\calE A|$.
    By \Cref{lem: rank and topological vertex comparison}, $$\rank(A')\le |\calE A'|\le (2R+1)|\calE A|\le 3(2R+1) r. \qedhere$$
\end{proof}

\begin{lemma}\label{nullhomotopic tubes}
    Let $A\hookrightarrow X_H$ be an embedded $\kappa$-quasigeodesic carrier of rank~$r$. 
    For $\rho\ge 0$, there exists $R=R(G,\rho,r)$ such that for all $e\in \calE A$, the map 
    \[\pi_1(N_\rho(e-N_{R}(\partial e))) \to \pi_1(X_H)\]  induced by inclusion is trivial.
\end{lemma}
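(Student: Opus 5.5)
The plan is to show that every loop in the tube $N_\rho(e-N_R(\partial e))$ is null-homotopic in $X_H$, using that the middle part of a long topological edge is a quasigeodesic segment. Since $X_H$ is a quotient of $X$, such a segment has a thin neighbourhood that should look like a neighbourhood of a quasigeodesic in the hyperbolic space $X$. In particular, the injectivity radius along that part of $e$ should be controlled.

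First, by \Cref{distance from edges}, choose $R_0=R(\kappa,\rho')$, for a suitable $\rho'\ge\rho$ fixed below, so that $N_{\rho'}(A-e)\cap e\subseteq N_{R_0}(\partial e)$. Here $A$ is embedded, so we identify it with its image.

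Take a loop $c$ in $N_\rho(e-N_R(\partial e))$, with $R\gg R_0$ to be chosen. Discretize it into points $c(t_0),\dots,c(t_m)=c(t_0)$ with consecutive points at distance $\le 1$. Join each $c(t_j)$ to a point $p_j\in e-N_R(\partial e)$ by a path of length $\le\rho$. This writes $c$ as a product of loops of the form $[p_j,c(t_j)]\cdot c|_{[t_j,t_{j+1}]}\cdot[c(t_{j+1}),p_{j+1}]\cdot\sigma_j$, where $\sigma_j$ is the subarc of $e$ from $p_{j+1}$ back to $p_j$. The $\sigma_j$ telescope, since $e$ is an arc and $c$ is closed. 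So it suffices to show each such small loop is null-homotopic.

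Each small loop consists of a path of length $\le 2\rho+1$ joining two points $p_j,p_{j+1}$ of $e$, followed by the arc of $e$ between them. Apply the $\kappa$-quasigeodesic carrier property to the path of length $\le 2\rho+1$ between $p_j,p_{j+1}$. It is homotopic rel endpoints to a path $\gamma_2$ in $A$ that lifts to a $\kappa$-quasigeodesic of length $\ell\le\kappa(2\rho+1+\kappa)$. If $R>R_0+\ell$, then $\gamma_2$, starting at $p_j$ which is at distance $>R$ from $\partial e$, cannot leave $e$. Hence $\gamma_2$ is a path inside the arc $e$ between $p_j$ and $p_{j+1}$, so it is homotopic rel endpoints in $e$ to the subarc. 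The small loop is therefore homotopic to $\gamma_2\cdot\sigma_j$, which is a loop in a contractible arc and hence trivial.

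This gives the lemma with $R=R_0+\kappa(2\rho+1+\kappa)+1$. This depends only on $\kappa,\rho$, and $\kappa=\kappa(r,G)$ by \Cref{lem:embedded carrier}, so $R=R(G,\rho,r)$. It turns out I do not even need the $R_0$ step, because the length bound on $\gamma_2$ already confines it to $e$.

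The main obstacle is the bookkeeping: ensuring the carrier property applies with endpoints at vertices of $A$, and that $\gamma_2$ stays within $e$. For the first, round $p_j$ to nearby vertices, adding $\le1$ to the lengths. For the second, use that $\gamma_2$ has combinatorial length $\ell$, so every point of it is within $\ell$ of $p_j$ along $A$.
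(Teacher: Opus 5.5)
Your proof is correct, but it takes a different route from the paper's.

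\textbf{The paper's route.} It argues through covering theory. Triviality of $\pi_1(Y)\to\pi_1(X_H)$ for $Y=N_\rho(e-N_R(\partial e))$ is recast as: the lift $\tilde Y=N_\rho(\tilde e-N_R(\partial\tilde e))$ is disjoint from its translates $h\tilde Y$ for $h\neq 1$. If some $h\tilde Y$ met $\tilde Y$, there would be deep points $x\in\tilde e$ and $y\in h\tilde e$ with $d(x,y)\le 2\rho$. The carrier property would then give a path through $A$ of length at most $\kappa(2\rho+\kappa)=R$, which is impossible because such a path must exit $e$. That last step is left implicit in the paper; it uses embeddedness together with the free action of the torsion-free group $H$.

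\textbf{Your route.} You build the null-homotopy by hand. You cut the loop into lollipops, each a short excursion of length $\le 2\rho+1$ between two deep points $p_j,p_{j+1}$ of $e$, closed up by an arc $\sigma_j$ of $e$. The same length estimate then shows that the carrier representative $\gamma_2$ of each excursion cannot reach $\partial e$, so it is homotopic to the arc.

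\textbf{Comparison.} The core inequality is identical. The paper's version is shorter and phrased in terms of lifts of the tube, which is the form used afterwards. Yours makes explicit where embeddedness enters: it identifies the endpoints of $\gamma_2$ in $A$ with $p_j,p_{j+1}$. It also never needs to explain why a path between different translates of $\tilde e$ must leave $e$.

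\textbf{Points to tighten.}
\begin{itemize}
\item You need the pieces $c|_{[t_j,t_{j+1}]}$ themselves to have length $\le 1$, not just endpoints at distance $\le 1$. A long piece with close endpoints in $X_H$ can lift to a path whose endpoints are far apart in $X$, and then the bound on $\ell$ fails. This is fixed by first homotoping $c$ to an edge loop in the graph neighbourhood and cutting at vertices.
\item A topological edge may be a closed loop at a single vertex. So you should use contractibility of the open arc $e-\partial e$, which contains both $\gamma_2$ and the $\sigma_j$, rather than of $e$.
\item As in the paper, your $R$ really depends on $\kappa$ and $\rho$. The dependence on $(G,r)$ enters only through how $\kappa$ is chosen when the lemma is applied.
\end{itemize}
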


\begin{proof}
    Let $R=\kappa(2\rho+\kappa)$.
    Let $Y=N_\rho(e-N_{R}(\partial e))$. 
    Let $\tilde e$ be a lift of  $e$ to $X$, and let $\tild Y = N_\rho(\tild e-N_{R}(\partial \tild e))$. 
    We note that the following are equivalent: 
    \begin{itemize}
        \item The map $\pi_1(Y) \to \pi_1(X_H)$ is trivial
        \item $Y$ lifts to $X$.
        \item $\tild Y\cap h\tild Y= \emptyset$ for all nontrivial $h\in H$.
    \end{itemize} 
    Assume for the sake of contradiction that there exists nontrivial $h\in H$ so that $\tild Y\cap h\tild Y\neq \emptyset.$
    Then there exists $x\in \tilde e - N_R(\partial \tilde e)$ and $y\in h(\tilde e - N_R(\partial \tilde e))$ so that $d(x,y)\leq 2\rho$.
    Let $[0,\ell_1]\to X$ be a geodesic joining $x$ to $y$, and let $\gamma_1$ be the composition $[0,\ell_1]\to X\to X_H$.
    By $\kappa$-quasigeodesicity, there exists a path $\gamma_2$ of the form $[0,\ell_2]\to A\to X_H$ such that its lift $\tild \gamma_2$ to $X$ is a $\kappa$-quasigeodesic joining $x$ to $y$.
    Thus we get the following contradiction
    $$2\rho=\kappa\ii R-\kappa < \kappa\ii \ell_2 -\kappa\le d(x,y)\le 2\rho. \qedhere$$
\end{proof}

\begin{proof}[Proof of \Cref{main thm}]
    Let $H_1,H_2$ be locally quasiconvex finitely-generated subgroups of $G$, and let $r_1=\rank (H_1), r_2=\rank(H_2)$. 
    Let $\kappa=\kappa(r_1,r_2,G)$ and $r'=r'(r_1,r_2,G)$ be the constants of \Cref{lem:embedded carrier}, so there exist embedded $\kappa$-quasigeodesic carriers $A_1\subset X_{H_1}, A_2\subset X_{H_2}$ for $H_1,H_2$ of rank at most $r'$.
    We may assume each $A_i$ has at most a single degree one vertex (its basepoint), hence by \Cref{lem: rank and topological vertex comparison}, $$|\calV A_i|\le 2r'+1\quad \text{ and }\quad |\calE A_i|\le 3r'.$$

    By the Morse Lemma, there exists $\rho$ such that any two $\kappa$-quasigeodesics in $X$ with the same endpoints are at Hausdorff distance at most $\rho$.
    Let $\bfA_1 = N_\rho(A_1)$, and let $\bfA_1\otimes A_2$ be the fiber product of the maps $\bfA_1\to X_G$ and $A_2\to X_G$, which are restrictions of the coverings $X_{H_1}\to X_G$ and $X_{H_2}\to X_G$.
    Let $\bfB$ be the component of $\bfA_1\otimes A_2$ containing the basepoint.
    Note that $\bfB$ is naturally a subcomplex of $X_{H_1\cap H_2} \subseteq X_{H_1}\otimes X_{H_2}$.
    
    \begin{claim}\label{bfB carries}
    $\bfB$ is a carrier for $H_1\cap H_2$.
    \end{claim}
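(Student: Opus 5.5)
The plan is to check the two defining properties of a carrier for the basepoint component $\bfB$: finiteness, and $\pi_1$-surjectivity onto $H_1\cap H_2$. Surjectivity will come from the $\kappa$-quasigeodesicity of $A_1$ and $A_2$ together with the Morse lemma.

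\emph{Setup and finiteness.} Recall the standard description of the fiber product $X_{H_1}\otimes X_{H_2}$ of the coverings $X_{H_i}\to X_G$. Its points are pairs $(H_1x,H_2y)$ lying over the same point $Gx=Gy$. Its components are covers of $X_G$ of the form $X_{H_1\cap gH_2g^{-1}}$, and the component through $(x_{H_1},x_{H_2})$ is $X_{H_1\cap H_2}$. The map to this component is $(H_1\cap H_2)x\mapsto (H_1x,H_2x)$.

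Since $X$ is locally finite and $A_1$ is a finite graph, $\bfA_1=N_\rho(A_1)$ is a finite subcomplex of $X_{H_1}$. The map $\bfA_1\to X_G$ is a restriction of a covering map, so the fibers of $\bfA_1\otimes A_2\to A_2$ are finite. As $A_2$ is finite, $\bfA_1\otimes A_2$, and hence $\bfB$, is a finite complex. Because $\bfB$ is connected and contains the basepoint, it sits as a subcomplex of $X_{H_1\cap H_2}$. Strictly speaking, a carrier is a graph, so we take the $1$-skeleton of $\bfB$ (or note that the argument below only uses combinatorial paths). It therefore remains to show that $\pi_1(\bfB)\to \pi_1(X_{H_1\cap H_2})=H_1\cap H_2$ is surjective.

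\emph{Surjectivity.} Let $h\in H_1\cap H_2$.
\begin{enumerate}
\item Since $A_2$ is a $\kappa$-quasigeodesic carrier, the loop at $x_{H_2}$ representing $h$ is homotopic to a combinatorial loop $\gamma_2$ in $A_2$. Its lift $\tilde\gamma_2$ to $X$ starting at $1$ is a $\kappa$-quasigeodesic from $1$ to $h$.
\item Similarly, there is a loop $\gamma_1$ in $A_1$ whose lift $\tilde\gamma_1$ is a $\kappa$-quasigeodesic from $1$ to $h$.
\item By the choice of $\rho$ via the Morse lemma, $\tilde\gamma_2\subseteq N_\rho(\tilde\gamma_1)$. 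Projecting to $X_{H_1}$, the path $\pi_1\circ\tilde\gamma_2$ lies in $N_\rho(A_1)=\bfA_1$. If needed, enlarge $\rho$ by $1$ so that whole edges, and not just vertices, lie in $\bfA_1$.
\item Consider the pair $(\pi_1\circ\tilde\gamma_2,\gamma_2)$, where $\pi_1\circ\tilde\gamma_2$ lies in $\bfA_1$ and $\gamma_2$ lies in $A_2$. Both coordinates lie over the same path in $X_G$, so the pair is a path in $\bfA_1\otimes A_2$ starting at the basepoint. It ends at $(H_1h,H_2h)=(x_{H_1},x_{H_2})$ because $h\in H_1\cap H_2$, so it is a loop in $\bfB$.
\item Under the identification with $X_{H_1\cap H_2}$, this loop is the image of $\tilde\gamma_2$, a path from $1$ to $h$. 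Hence it represents $h$.
\end{enumerate}
This proves surjectivity.

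The only delicate point is step~3: making sure that $\tilde\gamma_2$ lies in the $\rho$-neighborhood of a lift of a path in $A_1$ \emph{with the same endpoints}, rather than merely near some translate. This is exactly why both quasigeodesics are taken to run from $1$ to the same element $h$, using that $h$ lies in both subgroups.
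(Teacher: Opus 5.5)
Your proof is correct and follows essentially the same route as the paper. You represent an element of $H_1\cap H_2$ by loops in $A_1$ and $A_2$ whose lifts are $\kappa$-quasigeodesics with the same endpoints, use the Morse lemma to push the $A_2$-loop's image in $X_{H_1}$ into $\bfA_1$, and read off a loop in the basepoint component $\bfB$; your explicit finiteness check for $\bfB$ just fills in a detail the paper leaves implicit.
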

    \begin{proof}
    For any closed path $\gamma:I\to X_{H_1\cap H_2}$ based at the basepoint, and for $i=1,2$, there exists a homotopy rel endpoints from $\gamma$ to $\gamma_i$ such that $\gamma_i$ lifts to a $\kappa$-quasigeodesic in $X$ and maps to $A_i$ when composed with the cover $q_i:X_{H_1\cap H_2}\to X_{H_i}$.
    By the Morse Lemma, the quasigeodesics $\tild \gamma_1, \tild \gamma_2$ are at Hausdorff distance at most $\rho$.
    Therefore $\gamma_1,\gamma_2$ are at Hausdorff distance at most $\rho$, and the image of $\gamma_2$ under $q_1$ lies in $\bfA_1$.
    It follows that $\gamma_2$ is a loop in $\bfB$.
    \end{proof}


    Let $R = R(G,\rho,r)$ be the constant of \Cref{nullhomotopic tubes}. 
    For a topological edge $e$ of $A_1$, let $\bfe$ denote $N_\rho(e-N_R(\partial e))\subseteq \bfA_1$.
    Let $\partial \bfe$ denote $N_R(\partial e) \cap \bfe$.
    Let $Y_\bfe$ be a minimal tree in $\bfe$ that contains the vertices of $\partial \bfe$.

    \begin{claim}\label{homotopy to the tree}
        Any path in $\bfe$ with endpoints in $\partial \bfe$ is homotopic rel endpoints in $X_{H_1}$ to a path in $Y_\bfe$.
    \end{claim}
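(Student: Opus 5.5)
The plan is to use \Cref{nullhomotopic tubes}, which says exactly that every loop in $\bfe=N_\rho(e-N_R(\partial e))$ is null-homotopic in $X_{H_1}$. Homotopy classes rel endpoints of paths in $X_{H_1}$ with fixed endpoints form a torsor for $\pi_1(X_{H_1})$. So if two paths in $\bfe$ share endpoints, the loop formed by one followed by the inverse of the other lies in $\bfe$. That loop is therefore trivial in $\pi_1(X_{H_1})$, and the two paths are homotopic rel endpoints in $X_{H_1}$.

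The key steps, in order, are these.
\begin{enumerate}
\item Take a path $\gamma$ in $\bfe$ from $p$ to $q$, where $p,q$ are vertices of $\partial\bfe$. After homotoping within $\bfe$ if necessary, assume $\gamma$ is combinatorial with endpoints at vertices.
\item Since $Y_\bfe$ is a tree containing all vertices of $\partial\bfe$, it contains a unique embedded path $\gamma'$ from $p$ to $q$.
\item Since $Y_\bfe\subseteq\bfe$, the loop $\gamma\cdot\bar{\gamma}'$ lies in $\bfe$.
\item By \Cref{nullhomotopic tubes}, the loop $\gamma\cdot\bar{\gamma}'$ is null-homotopic in $X_{H_1}$, so $\gamma\simeq\gamma'$ rel endpoints.
\end{enumerate}

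The main obstacle is the bookkeeping about connectivity and about what $\partial\bfe$ consists of. For $Y_\bfe$ to exist as a tree, all vertices of $\partial\bfe$ must lie in a single component of $\bfe$. This holds because $\bfe$ is a $\rho$-neighborhood of the connected arc $e-N_R(\partial e)$, hence connected. If the endpoints of the path may be non-vertex points of $\partial\bfe$, first slide them along short segments inside $\bfe$ to nearby vertices of $\partial\bfe$. The same argument then applies, provided these segments lie in $\bfe$ and their endpoints are in $Y_\bfe$ or can be absorbed into the statement. Otherwise I would simply read ``endpoints in $\partial\bfe$'' as vertices, matching the definition of $Y_\bfe$.

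One should also check that the $R$ here is the $R$ of \Cref{nullhomotopic tubes} applied with $\rho$ and the rank bound $r'$ of $A_1$, as fixed in the text. That lemma was stated for an embedded $\kappa$-quasigeodesic carrier, which $A_1\subset X_{H_1}$ is.
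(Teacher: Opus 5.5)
Your proposal is correct and follows the paper's argument: join the endpoints by a path in the tree $Y_\bfe$, observe that the concatenation with $\gamma$ is a loop in $\bfe$, and conclude from \Cref{nullhomotopic tubes} that this loop is null-homotopic in $X_{H_1}$. Your extra remarks about connectivity of $\bfe$ and about reading the endpoints as vertices of $\partial\bfe$ are bookkeeping that the paper leaves implicit; they do not change the argument.
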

    
    \begin{proof}
        Let $\gamma$ be such a path, and let $\eta$ be a path in $Y_\bfe$ with the same endpoints. The concatenation $\gamma\eta$ is a closed loop in $\bfe$.  By \Cref{nullhomotopic tubes}, $\pi_1(\bfe)\to \pi_1(X_{H_1})$ is trivial, and so $\gamma \eta$ is nullhomotopic in $X_{H_1}$. 
    \end{proof}

    Let $\hat \bfe$ be a component of $q_1\ii(\bfe)$.
    By \Cref{nullhomotopic tubes}, $\hat \bfe$ is a lift of $\bfe$, i.e. the map $q_1|_{\hat \bfe}:\hat \bfe\to \bfe$ is an isomorphism. 
    Let $\partial \hat \bfe$ be the corresponding lift of $\partial \bfe$, i.e. $\partial \hat \bfe=(q_1|_{\hat \bfe})\ii(\partial \bfe)$.
    By \Cref{distance from edges}, the graph $\bfA_1$ decomposes as $\bfA_1 = (\bfA_1-\bfe)\cup _{\partial \bf e}\bfe$.
    Thus the graph $\bfB$ decomposes as $\bfB = (\bfB - \hat \bfe)\cup_{\bfB\cap \partial \hat \bfe} (\bfB\cap \hat \bfe)$.
    We say $\hat \bfe$ is \emph{thin} if $q_2(\hat\bfe\cap \bfB)$ does not contain a topological vertex of $A_2$, and \emph{thick} otherwise.

    Let $B$ be the graph obtained from $\bfB$ by removing $\hat \bfe\cap \bfB$ for all thick components $\hat \bfe$ and replacing them with the lift $Y_{\hat \bfe}$ of the tree $Y_\bfe$, ranging over all topological edges $e$ of $A_1$.


    \begin{claim}
        $B$ is a carrier for $H_1\cap H_2$.
    \end{claim}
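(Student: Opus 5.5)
We need two things: $B$ is a finite pointed graph mapping into $X_{H_1\cap H_2}$ (so it is a graph map), and $\pi_1 B\to \pi_1 X_{H_1\cap H_2}=H_1\cap H_2$ is surjective. The plan is to use \Cref{bfB carries} together with \Cref{homotopy to the tree}, and to homotope loops in $\bfB$ off the thick pieces and into the trees $Y_{\hat\bfe}$.

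First, check that $B$ is a well-defined connected subgraph of $X_{H_1\cap H_2}$ containing the basepoint. By \Cref{distance from edges}, $\bfB$ decomposes as $(\bfB-\hat\bfe)\cup_{\bfB\cap\partial\hat\bfe}(\bfB\cap\hat\bfe)$. The tree $Y_{\hat\bfe}$ lies in $\hat\bfe$ and contains $\partial\hat\bfe$, so attaching it along $\partial\hat\bfe$ keeps $B$ inside $X_{H_1\cap H_2}$. For this I need $Y_{\hat\bfe}\subset X_{H_1\cap H_2}$. Since $\hat\bfe$ is a component of $q_1^{-1}(\bfe)$ in $X_{H_1\cap H_2}$, the lift $Y_{\hat\bfe}$ sits in $X_{H_1\cap H_2}$, although possibly not inside $\bfB$. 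That is harmless: a carrier only requires a graph map to $X_{H_1\cap H_2}$.

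For connectivity, take a vertex of $\bfB$ in a removed thick piece, or a point of $\partial\hat\bfe\cap\bfB$. The vertices of $\partial\hat\bfe$ are all joined through $Y_{\hat\bfe}$. Also, $\bfB$ was connected, and any path between surviving vertices can be rerouted, as in the next step. We may also assume the basepoint is not in the interior of a removed piece; if necessary, enlarge $R$ so that basepoints lie near topological vertices.

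For surjectivity, let $h\in H_1\cap H_2$. By \Cref{bfB carries}, $h$ is represented by a loop $\gamma$ in $\bfB$ at the basepoint. Decompose $\gamma$ into maximal subpaths lying in $\bfB\cap\hat\bfe$ for the thick $\hat\bfe$, with endpoints in $\partial\hat\bfe$ (excursions from the basepoint handled as above), and the remaining subpaths lying in $B$. Each excursion $\sigma\subset\hat\bfe$ projects under $q_1$ isomorphically to a path in $\bfe$ with endpoints in $\partial\bfe$. By \Cref{homotopy to the tree}, $q_1\sigma$ is homotopic rel endpoints in $X_{H_1}$ to a path in $Y_\bfe$. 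Since $Y_\bfe\subset\bfe$ and $\pi_1(\bfe)\to\pi_1(X_{H_1})$ is trivial (\Cref{nullhomotopic tubes}), the loop formed by $q_1\sigma$ and the tree path is nullhomotopic in $X_{H_1}$. Its lift to $\hat\bfe$ is $\sigma$ followed by the lifted tree path, and it is a closed loop in $X_{H_1\cap H_2}$.

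This lift being closed is the main obstacle: a loop nullhomotopic in $X_{H_1}$ must be nullhomotopic in the cover $X_{H_1\cap H_2}$. That holds because covering maps are $\pi_1$-injective, and the loop is closed in $X_{H_1\cap H_2}$ since $\hat\bfe\cong\bfe$. Alternatively, it is already nullhomotopic inside $\hat\bfe\subset X_{H_1\cap H_2}$ up to the image of $\pi_1\hat\bfe$, which is trivial by the same injectivity argument. So $\sigma$ is homotopic rel endpoints in $X_{H_1\cap H_2}$ to a path in $Y_{\hat\bfe}\subset B$.

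Replacing every excursion this way yields a loop in $B$ homotopic to $\gamma$, so $h$ lies in the image of $\pi_1 B$. Finiteness of $B$ is immediate, since $\bfB$ and the trees are finite. Hence $B$ is a carrier for $H_1\cap H_2$.
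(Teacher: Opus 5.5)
Your argument is correct and is essentially the paper's: you pass to a loop in $\bfB$ via \Cref{bfB carries}, cut it into excursions through thick $\hat\bfe$ with endpoints in $\partial\hat\bfe$, and push each one into $Y_{\hat\bfe}$ by \Cref{homotopy to the tree}, making explicit (as the paper does not) the lift of that homotopy from $X_{H_1}$ to $X_{H_1\cap H_2}$ via $\hat\bfe\cong\bfe$ and $\pi_1$-injectivity of $q_1$. One small correction: enlarging $R$ cannot force a degree-two basepoint to lie near a topological vertex, since topological edges may be arbitrarily long. Instead, either declare the basepoint a topological vertex of $A_1$, or, as the paper does, base the loop at a point of $B$.
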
 
    \begin{proof}
        Let $\gamma: I \to X_{H_1\cap H_2}$ be a loop based at a point of $B$. 
        By \Cref{bfB carries}, we may suppose the image of $\gamma$ lies in $\bfB$. 
        Let $\hat \bfe$ be a thick component of $q_1\ii(\bfe)$ for some $e\in \calE A_1$.
        As we saw, the graph $\bfB$ decomposes as $\bfB = (\bfB - \hat \bfe)\cup_{\bfB\cap \partial \hat \bfe} (\bfB\cap \hat \bfe)$.
        Hence each subpath of $\gamma$ that lies in $\bfB\cap \hat\bfe$ has endpoints in $\bfB\cap \partial \hat \bfe$.
        By \Cref{homotopy to the tree}, such a subpath can be homotoped rel its endpoints into $Y_{\hat \bfe}$.
        Performing this homotopy for each subpath in a thick $\hat\bfe$ results in a loop in $B$.
    \end{proof}

    Finally, we want to show that $\rank(H_1\cap H_2)$ can be bounded in terms of $\rank(H_1),\rank(H_2)$. 
    A vertex $v\in VB$ of degree at least three must project to vertices of degree at least three in $\bfA_1$ and $A_2$.  
    Thus any $v\in VB$ of degree at least three is one of the following types:
        \begin{enumerate}[label = \bf\Roman*.]
        \item $v\in N_\rho(u)\otimes \calV A_2$ for some $u\in \calV A_1$,
        \item $v\in \partial \hat \bfe \cap B$ for some thick lift $\hat \bfe$ of some $e\in \calE A_1$, or
        \item $v\in \calV(Y_{\hat \bfe})$ for some thick lift $\hat \bfe$ of some $e\in \calE A_1$.
        \end{enumerate}

        We let $\mathbf{I}, \mathbf{II}, \mathbf{III}\subset VB$ denote the sets of vertices of degree at least three satisfying the corresponding above condition.
        The vertices of the first type are bounded by
        $$|\bfI|\le C|\calV A_1||\calV A_2|,$$ 
        where $C=C(\rho)$ is the maximal number of vertices in a ball of radius $\rho$ in $X_{H_1}$. 
        
        The number of thick lifts $\hat \bfe$ is bounded by $|\calE A_1||\calV A_2|$.
        For any thick lift $\hat \bfe$, the number of vertices in $\partial \hat \bfe \cap B$ can be bounded by $2C'$ where $C'=C'(R)$ is the maximal number of vertices in a ball of radius $R$ in $X_{H_1}$.
        Thus we have 
        $$|\mathbf{II}|\le 2C'|\calE A_1||\calV A_2|.$$
        
        The leaves of $Y_\bfe$ are contained in $\partial \bfe$, so the number of leaves is at most $|\partial \bfe| \le 2C'$. 
        The number of vertices of degree at least three in a (finite) tree is bounded by the number of leaves, so there are at most $2C'$ vertices of degree at least three in $Y_{\hat \bfe}$. 
        Thus $Y_\bfe$ has at most $4C'$ topological vertices, and the third type vertices are bounded by
        $$|\mathbf{III}|\le 4C'|\calE A_1||\calV A_2|.$$

        Let $B'\subset B$ be the core subgraph of $B$, i.e. $\pi_1B'\to \pi_1B$ is an isomorphism, and $B'$ has no degree one vertices except maybe its basepoint.
        Note that $\calV B'$ is a subset of the vertices of degree at least three in $B$ and maybe the basepoint.
        So, summing our above bounds, we have
        $$|\calV B'|\le |\bfI|+|\mathbf{II}|+|\mathbf{III}|\le C|\calV A_1||\calV A_2| + 6C'|\calE A_1||\calV A_2|.$$
        
        Since $B'\subseteq X_{H_1\cap H_2}$ its degrees are bounded by $d:=2\rank(G)$. 
       Applying both parts of \Cref{lem: rank and topological vertex comparison}, we have
        \begin{align*}
            \rank(H_1\cap H_2) &\le 2d|\calV B'| \\
            &\le 2d(C|\calV A_1||\calV A_2| + 6C'|\calE A_1||\calV A_2|)\\
            &\le 2d(C(2r'+1)^2 + 6C'(3r')(2r'+1)).
        \end{align*}
        This inequality completes the proof since the quantities in the last expression depend only on $G$ and the ranks of $H_1,H_2$.
\end{proof}


We end the article with two questions:

\begin{question}
    Does \Cref{main thm} remain true when the subgroups have torsion?
\end{question}

Note that \Cref{thm: carrier existence} fails as stated if we allow torsion. 

\begin{question}
    Does there exist $r=r(r_1,r_2,d)$ such that for any locally quasiconvex subgroups $H_1,H_2\le \textrm{Isom}(\bbH^d)$ of ranks $r_1,r_2$, the rank of $H_1\cap H_2$ is at most $r$?
\end{question}
Note that if $H_1,H_2$ are contained in a lattice $\Gamma\le \textrm{Isom}(\bbH^d)$ then \Cref{main thm} shows that $r$ can be taken as a function of $r_1,r_2$ and $\Gamma$.

\bibliographystyle{plain}
\bibliography{biblio}
\end{document}

%% file: preamble.tex
\usepackage[leqno]{amsmath}
\usepackage{amssymb,amsfonts,xfrac,MnSymbol}
\usepackage{amsthm}
\usepackage{thmtools}
\usepackage{cite}
\usepackage{hyperref}
\usepackage{todonotes}
\usepackage{enumitem}
\usepackage{graphicx}
\usepackage{tikz-cd, tikz}
\usepackage{color}
\usepackage{import}
\usepackage{cleveref}

\numberwithin{equation}{section}

\newtheorem{theorem}{Theorem}[section]
\newtheorem{claim}[theorem]{Claim}

\newtheorem{lemma}[theorem]{Lemma}

\newtheorem*{theorem*}{Theorem}
\newtheorem*{claim*}{Claim}
\newtheorem*{proposition*}{Proposition}
\newtheorem*{lemma*}{Lemma}
\newtheorem*{corollary*}{Corollary}

\newtheorem{theoremA}{Theorem}

\theoremstyle{definition}
\newtheorem{definition}[theorem]{Definition}

\newtheorem{example}[theorem]{Example}
\newtheorem{question}[theorem]{Question}

\newtheorem*{definition*}{Definition}
\newtheorem*{observation*}{Observation}
\newtheorem*{remark*}{Remark}
\newtheorem*{example*}{Example}
\newtheorem*{question*}{Question}
\newtheorem*{exercise*}{Exercise}
\newtheorem*{fact*}{Fact}
\newtheorem*{notation*}{Notation}

\newcommand{\bbH}{\mathbb{H}}

\newcommand{\bbN}{\mathbb{N}}

\newcommand{\bbZ}{\mathbb{Z}}

\newcommand{\bfA}{\mathbf{A}}
\newcommand{\bfB}{\mathbf{B}}

\newcommand{\bfI}{\mathbf{I}}

\newcommand{\bfe}{\mathbf{e}}

\newcommand{\calB}{\mathcal{B}}

\newcommand{\calE}{\mathcal{E}}

\newcommand{\calV}{\mathcal{V}}

\newcommand{\normalin}{\lhd}

\newcommand{\ii}{^{-1}}

\newcommand{\gen}[1]{\left< #1 \right>}

\newcommand{\tild}[1]{\widetilde{#1}}

\DeclareMathOperator{\rank}{rank}

%% file: biblio.bib
@article{antolin2022hanna,
  title={The Hanna Neumann conjecture for surface groups},
  author={Antol{\'\i}n, Yago and Jaikin-Zapirain, Andrei},
  journal={Compositio Mathematica},
  volume={158},
  number={9},
  pages={1850--1877},
  year={2022},
  publisher={London Mathematical Society}
}

@book{biringer2025thick,
  title={Thick hyperbolic 3-manifolds with bounded rank},
  author={Biringer, Ian and Souto, Juan},
  volume={316},
  number={1607},
  year={2025},
  publisher={American Mathematical Society}
}

@book {coornaertDelzantPapadopoulos1990,
    AUTHOR = {Coornaert, M. and Delzant, T. and Papadopoulos, A.},
     TITLE = {G\'eom\'etrie et th\'eorie des groupes},
    SERIES = {Lecture Notes in Mathematics},
    VOLUME = {1441},
      NOTE = {Les groupes hyperboliques de Gromov. [Gromov hyperbolic
              groups],
              With an English summary},
 PUBLISHER = {Springer-Verlag, Berlin},
      YEAR = {1990},
     PAGES = {x+165},
      ISBN = {3-540-52977-2},
   MRCLASS = {57M07 (20F32)},
  MRNUMBER = {1075994},
MRREVIEWER = {John\ Meier},
}

@article {friedman2015sheaves,
    AUTHOR = {Friedman, Joel},
     TITLE = {Sheaves on graphs, their homological invariants, and a proof
              of the {H}anna {N}eumann conjecture: with an appendix by
              {W}arren {D}icks},
      NOTE = {With an appendix by Warren Dicks},
   JOURNAL = {Mem. Amer. Math. Soc.},
  FJOURNAL = {Memoirs of the American Mathematical Society},
    VOLUME = {233},
      YEAR = {2015},
    NUMBER = {1100},
     PAGES = {xii+106},
      ISSN = {0065-9266,1947-6221},
      ISBN = {978-1-4704-0988-3},
   MRCLASS = {20E05 (05C10 55N30)},
  MRNUMBER = {3289057},
MRREVIEWER = {Matthew\ C. B. Zaremsky},
       DOI = {10.1090/memo/1100},
       URL = {https://doi.org/10.1090/memo/1100},
}

@article {howson1954intersection,
    AUTHOR = {Howson, A. G.},
     TITLE = {On the intersection of finitely generated free groups},
   JOURNAL = {J. London Math. Soc.},
  FJOURNAL = {The Journal of the London Mathematical Society},
    VOLUME = {29},
      YEAR = {1954},
     PAGES = {428--434},
      ISSN = {0024-6107,1469-7750},
   MRCLASS = {20.0X},
  MRNUMBER = {65557},
MRREVIEWER = {Don\ Higman},
       DOI = {10.1112/jlms/s1-29.4.428},
       URL = {https://doi.org/10.1112/jlms/s1-29.4.428},
}

@article{kapovich2004freely,
  title={Freely indecomposable groups acting on hyperbolic spaces},
  author={Kapovich, Ilya and Weidmann, Richard},
  journal={International Journal of Algebra and Computation},
  volume={14},
  number={02},
  pages={115--171},
  year={2004},
  publisher={World Scientific}
}

@misc{kohav2024ascendingchainsfreequasiconvex,
      title={Ascending Chains of Free Quasiconvex Subgroups}, 
      author={Jack Kohav and Nir Lazarovich},
      year={2024},
      eprint={2405.13534},
      archivePrefix={arXiv},
      primaryClass={math.GR},
      url={https://arxiv.org/abs/2405.13534}, 
}

@article {mineyev2012submultiplicativity,
    AUTHOR = {Mineyev, Igor},
     TITLE = {Submultiplicativity and the {H}anna {N}eumann conjecture},
   JOURNAL = {Ann. of Math. (2)},
  FJOURNAL = {Annals of Mathematics. Second Series},
    VOLUME = {175},
      YEAR = {2012},
    NUMBER = {1},
     PAGES = {393--414},
      ISSN = {0003-486X,1939-8980},
   MRCLASS = {57M07 (20E05)},
  MRNUMBER = {2874647},
MRREVIEWER = {Peter\ A.\ Linnell},
       DOI = {10.4007/annals.2012.175.1.11},
       URL = {https://doi.org/10.4007/annals.2012.175.1.11},
}

@article {neumann1956hanna,
    AUTHOR = {Neumann, Hanna},
     TITLE = {On the intersection of finitely generated free groups},
   JOURNAL = {Publ. Math. Debrecen},
  FJOURNAL = {Publicationes Mathematicae Debrecen},
    VOLUME = {4},
      YEAR = {1956},
     PAGES = {186--189},
      ISSN = {0033-3883,2064-2849},
   MRCLASS = {20.0X},
  MRNUMBER = {78992},
MRREVIEWER = {Don\ Higman},
       DOI = {10.5486/pmd.1956.4.3-4.12},
       URL = {https://doi.org/10.5486/pmd.1956.4.3-4.12},
}

@incollection {neumann1990walter,
    AUTHOR = {Neumann, Walter D.},
     TITLE = {On intersections of finitely generated subgroups of free
              groups},
 BOOKTITLE = {Groups---{C}anberra 1989},
    SERIES = {Lecture Notes in Math.},
    VOLUME = {1456},
     PAGES = {161--170},
 PUBLISHER = {Springer, Berlin},
      YEAR = {1990},
      ISBN = {3-540-53475-X},
   MRCLASS = {20E05 (20E07)},
  MRNUMBER = {1092229},
MRREVIEWER = {Julian\ Petresco},
       DOI = {10.1007/BFb0100737},
       URL = {https://doi.org/10.1007/BFb0100737},
}

@article{rips1982subgroups,
  title={Subgroups of small cancellation groups},
  author={Rips, Eliyahu},
  journal={Bulletin of the London Mathematical Society},
  volume={14},
  number={1},
  pages={45--47},
  year={1982},
  publisher={Wiley Online Library}
}

@article{soma1990intersection,
  title={Intersection of finitely generated surface groups},
  author={Soma, Teruhiko},
  journal={Journal of Pure and Applied Algebra},
  volume={66},
  number={1},
  pages={81--95},
  year={1990},
  publisher={Elsevier}
}

@article{soma1991intersection,
  title={Intersection of Finitely Generated Surface Groups {II}},
  author={Soma, Teruhiko},
  journal={Bulletin of the Kyushu Institute of Technology. Mathematics, natural science},
  number={38},
  pages={13--22},
  year={1991},
  publisher={九州工業大学工学部}
}
